\newtheorem{thm}{Theorem}[section]
\newtheorem{lema}[thm]{Lemma}
\theoremstyle{definition}
\theoremstyle{remark}
\newtheorem{rem}[thm]{Remark}
\numberwithin{equation}{section}
\newcommand{\R}{\mathbb R}
\newcommand{\C}{\mathcal{C}}
\newcommand{\A}{\mathcal{A}}
\newcommand{\ve}{\varepsilon}
\newcommand{\lam}{\lambda}
\begin{document}
\title[A Lyapunov type Inequality]{A Lyapunov type Inequality for  Indefinite Weights and Eigenvalue Homogenization}
\author[J Fern\'andez Bonder, J P Pinasco, A M Salort]{Juli\'an Fern\'andez Bonder, Juan P. Pinasco, Ariel M. Salort }
\address{Departamento de Matem\'atica \hfill\break \indent   IMAS - CONICET
 \hfill\break \indent FCEyN - Universidad de Buenos Aires
\hfill\break \indent Ciudad Universitaria, Pabell\'on I \hfill\break \indent   (1428)
Av. Cantilo s/n. \hfill\break \indent Buenos Aires, Argentina.}
\email[J. Fernandez Bonder]{jfbonder@dm.uba.ar}
\urladdr[J. Fernandez Bonder]{http://mate.dm.uba.ar/~jfbonder}
\email[J.P. Pinasco]{jpinasco@dm.uba.ar}
\urladdr[J.P. Pinasco]{http://mate.dm.uba.ar/~jpinasco}
\email[A.M. Salort]{asalort@dm.uba.ar}

\begin{abstract}
In this paper we prove a Lyapunov type inequality for quasilinear problems with indefinite weights. We show that
the first eigenvalue is bounded below in terms of the integral of the weight, instead of the integral of its
 positive part. We apply this inequality to some eigenvalue homogenization problems with indefinite weights.
 \end{abstract}

\maketitle

\section{Introduction}

In this paper we will consider the following quasilinear eigenvalue problem,
\begin{equation}\label{one}
 \begin{cases}
   -( a(x)|u'|^{p-2}u')'= \lam \rho(x)|u|^{p-2}u  \qquad x\in (0,L)\\
   u(0)=u(L)=0,
 \end{cases}
\end{equation}
where $1<p<\infty$, $\lambda$ is a real parameter, the weight $\rho \in L^1(0,L)$ is allowed to
change signs, and the coefficient $a(x)$ belongs to the Muckenhoupt
class $ \A_p$, see Section \S 2 for definitions.

Quasilinear eigenvalue problems with indefinite weighs were studied first by Zeidler in \cite{Zeidler}, with $a\equiv 1$, and the extension
to general coefficients  $a\in \A_p$ follows by using the techniques in \cite{OpK} and \cite{Ture}.
There are several results concerning the existence and asymptotic behavior of   two sequences of eigenvalues
$\{\lam_{k}^+\}_{k\ge 1}$, $\{\lam_{k}^-\}_{k\ge 1}$,  going to $\pm \infty$, in both the linear and nonlinear case,  in one and several dimensions,
 see for instance \cite{All, Cuesta, FBP, Fl, Kato, Sm}.

For one dimensional problems, the eigenvalues are simple, in the sense that $\lam_k^{\pm}$ has a unique (up to a
multiplicative constant) associated eigenfunction $u_k^{\pm}$ with exactly $k+1$ zeros
in $[0,L]$. When $a\equiv 1$, the asymptotic behavior of the eigenvalues is given by
\begin{equation}\label{asymp}
\lambda_k^{\pm} \sim\left(\frac{\pi_p}{\int_0^L (\rho^{\pm}(x))^{1/p}
dx}\right)^p,
\end{equation}
where $\rho^+(x) = \max\{\rho(x), 0\}$, and $\rho^-=(-\rho)^+$, and
$$
 \pi_p =  2(p-1)^{1/p} \int_0^1 \frac{ds}{(1-s^p)^{1/p}}.
$$
The case of nonconstant
principal coefficient can be analyzed easily in exactly the same way by using a change of
variable (see Section \S 2).

However,   the asymptotic formula
\eqref{asymp} cannot be used to derive precise bounds for $\lam_k^{\pm}$.
A useful tool in order to obtain lower bounds for eigenvalues in one dimensional problems is Lyapunov's inequality: if
there exists a nontrivial solution $u$ of
 $$ \begin{cases}
   -u''= \rho(x)u  \qquad x\in (0,L)\\
   u(0)=u(L)=0,
 \end{cases}$$
then we must have
$$
\frac{4}{ L } \le   \int_0^L |\rho(t)|dt,
$$
an inequality proved by Borg \cite{Bo1}, and improved later by Wintner \cite{Wi} who
used $\rho^+(t)$ instead of $ |\rho(t)|$. A similar inequality holds for $p-$Laplacian
operators, and applying it in each nodal domain, we can bound the $k-$th eigenvalue as follows,
$$
\frac{2^p k^p}{ L^{p-1}} \le \lam_k   \int_0^L \rho^+(t)dt,
$$
see \cite{Pi}, and \cite{Pi1} for a recent survey.

\bigskip
However, it is possible to obtain better bounds of the eigenvalues, and we will prove the
following Lyapunov type inequality which takes care of the negative part of the weight
as well:

\begin{thm}\label{lyapi}
Let $u$ be a solution of
 \begin{equation}\label{plapuna}
 -( a(x)|u'|^{p-2}u')'=  \rho(x)|u|^{p-2}u  \qquad x\in (0,L)
 \end{equation}
satisfying the boundary condition $u(0)=u(L)=0$. Then,
\begin{equation}\label{lyafirst}
\frac{1}{p}\left(\int_0^L  a^{-\frac{1}{p-1}}(t)
   dt\right)^{1-p}  \le     \sup_{0\le x\le L} \left|\int_0^x \rho(t)dt \right|
 \end{equation}
\end{thm}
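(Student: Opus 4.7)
The idea is to derive two-sided bounds on the weighted Dirichlet energy $I:=\int_0^L a|u'|^p\,dx$: an upper bound expressed through $N := \sup_{[0,L]}\!\left|\int_0^x\rho\right|$ and a lower bound through $M := \max_{[0,L]}|u|$, and then conclude by comparing them.

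For the upper bound, test \eqref{plapuna} against $u$ and integrate by parts, using $u(0)=u(L)=0$, to obtain the energy identity $I=\int_0^L \rho|u|^p\,dx$. Setting $R(x)=\int_0^x\rho(t)\,dt$ and integrating by parts a second time (since $R(0)=0=u(L)$ the boundary terms vanish) transfers the derivative onto $|u|^p$:
$$I=-p\int_0^L R(x)|u|^{p-2}u\,u'\,dx,\qquad\text{hence}\qquad I\le pN\int_0^L|u|^{p-1}|u'|\,dx.$$
Splitting the integrand as $(|u|^{p-1}a^{-1/p})\cdot(a^{1/p}|u'|)$ and applying H\"older with conjugate exponents $p/(p-1)$ and $p$ bounds the last integral by $M^{p-1}A^{(p-1)/p}I^{1/p}$, where $A:=\int_0^L a^{-1/(p-1)}(x)\,dx$; after absorbing the $I^{1/p}$ on the left this yields
$$I\le (pN)^{p/(p-1)} A\, M^p.$$

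For the reverse inequality, pick $x_0\in(0,L)$ with $|u(x_0)|=M$. Representing $M$ both as $\left|\int_0^{x_0}u'\,dt\right|$ and as $\left|\int_{x_0}^L u'\,dt\right|$ and applying the same weighted H\"older trick on each half gives
$$M^p\le \alpha^{p-1}\!\int_0^{x_0}\!a|u'|^p\,dx,\qquad M^p\le \beta^{p-1}\!\int_{x_0}^L\!a|u'|^p\,dx,$$
with $\alpha:=\int_0^{x_0}a^{-1/(p-1)}$ and $\beta:=A-\alpha$. Passing to reciprocals and summing produces $M^p(\alpha^{1-p}+\beta^{1-p})\le I$; minimizing the convex function $\alpha\mapsto \alpha^{1-p}+(A-\alpha)^{1-p}$ (minimum $2^p A^{1-p}$, attained at $\alpha=A/2$) yields $2^p A^{1-p}M^p\le I$. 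Combining with the upper bound and canceling $M^p>0$ collapses everything to the numerical inequality $2^p/A^p\le (pN)^{p/(p-1)}$, equivalently $\frac{2^{p-1}}{p\,A^{p-1}}\le N$, which is even slightly stronger than \eqref{lyafirst}.

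The proof is essentially a chain of sharp weighted H\"older inequalities, and the regularity needed to perform the integrations by parts is standard in the weighted Sobolev setting for $a\in\A_p$. The step that genuinely requires insight is the \emph{direction} of the second integration by parts in the upper bound: placing the derivative on $|u|^p$ rather than leaving it on $\rho$ is what produces the primitive $R$, and hence replaces the crude estimate by $\int|\rho|$ with the finer $\sup\bigl|\int_0^x\rho\bigr|$ that the theorem demands.
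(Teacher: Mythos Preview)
Your argument is correct and follows the same variational template as the paper: test \eqref{plapuna} against $u$ to get the energy identity, integrate by parts once more against the primitive $R(x)=\int_0^x\rho$ to replace $\rho$ by $\sup|R|$, and finish with weighted H\"older. The paper handles the remaining factor $\int_0^L|u|^{p-1}|u'|\,dx$ slightly differently---it pulls out $\|u\|_\infty^{p-1}$ and then bounds both $\|u\|_\infty$ and $\int_0^L|u'|$ by the one--sided estimate of Lemma~\ref{taylorbound} (with $m=1$, using only $u(0)=0$), which after cancellation of $I$ gives exactly $\frac{1}{p}A^{1-p}\le N$.

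The genuine difference is your lower bound on $I$: instead of the one--sided bound $M^p\le A^{p-1}I$ coming from Lemma~\ref{taylorbound}, you split at the maximum point $x_0$ and use \emph{both} boundary conditions to obtain $2^{p}A^{1-p}M^p\le I$. This buys you the sharper constant $2^{p-1}/p$ in place of $1/p$, at the modest cost of the extra convexity/minimization step. The paper's route is a line shorter and feeds directly into the higher--order generalization (Theorem~\ref{lyapimp}) via the same lemma; your route recovers a piece of the classical factor $2^p$ from the two--endpoint Lyapunov argument that the paper's one--sided estimate leaves on the table.
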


As a consequence, we have the following bound for $\lam_k^+$:

\begin{thm}\label{lyapu}
Let $\lambda_k^+$ be the $k$-th eigenvalue of
problem \eqref{one}  with $\rho \in L^1[0,L]$. Then,
\begin{equation}\label{jfbjppams}
 \frac{k^{p-1}}{p} \left(  \int_{0}^{L}  a^{-\frac{1}{p-1}}(t)
   dt  \right)^{1-p}  \le \;  \lam_k \cdot \sup_{(a,b)\subset [0,L]} \left|\int_a^b \rho(t)dt \right|
.
\end{equation}
\end{thm}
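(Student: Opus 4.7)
The plan is to reduce Theorem \ref{lyapu} to the single--nodal--domain inequality \eqref{lyafirst} proved in Theorem \ref{lyapi}, using the fact (already quoted in the introduction) that the eigenfunction $u_k^+$ associated with $\lam_k^+$ has exactly $k+1$ zeros in $[0,L]$ and hence partitions the interval into $k$ nodal subintervals.

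First I would fix the eigenpair $(\lam_k^+, u_k^+)$ and list the zeros $0=x_0<x_1<\cdots<x_k=L$ of $u_k^+$. On each nodal subinterval $I_j=(x_{j-1},x_j)$ the restriction of $u_k^+$ is a nontrivial solution of
\begin{equation*}
 -\bigl(a(x)|u'|^{p-2}u'\bigr)'=\lam_k^+\rho(x)|u|^{p-2}u, \qquad x\in I_j,
\end{equation*}
with vanishing values at the endpoints of $I_j$. I then apply Theorem \ref{lyapi} to this problem on $I_j$ with the weight $\lam_k^+\rho$, obtaining
\begin{equation*}
 \frac{1}{p}\left(\int_{x_{j-1}}^{x_j} a^{-\frac{1}{p-1}}(t)\,dt\right)^{1-p}
 \le \lam_k^+ \sup_{x_{j-1}\le x\le x_j}\left|\int_{x_{j-1}}^{x}\rho(t)\,dt\right|
 \le \lam_k^+\, M,
\end{equation*}
where $M=\sup_{(a,b)\subset[0,L]}\bigl|\int_a^b\rho(t)\,dt\bigr|$.

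Writing $A_j=\int_{x_{j-1}}^{x_j} a^{-1/(p-1)}(t)\,dt$ and $A=\sum_{j=1}^k A_j=\int_0^L a^{-1/(p-1)}(t)\,dt$, the previous inequality is just $A_j^{1-p}\le p\lam_k^+ M$, that is $A_j\ge (p\lam_k^+ M)^{-1/(p-1)}$ for every $j=1,\dots,k$. Summing in $j$ gives
\begin{equation*}
 \int_0^L a^{-\frac{1}{p-1}}(t)\,dt \;=\; A \;\ge\; k\,(p\lam_k^+ M)^{-\frac{1}{p-1}},
\end{equation*}
and raising to the power $-(p-1)$ and rearranging yields exactly \eqref{jfbjppams}. (Alternatively, one can sum the bounds $A_j^{1-p}\le p\lam_k^+M$ and invoke the convexity of $x\mapsto x^{1-p}$ via Jensen's inequality to obtain $k^p/A^{p-1}\le \sum_j A_j^{1-p}\le kp\lam_k^+M$, arriving at the same conclusion.)

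No serious obstacle is expected: the argument is essentially Theorem \ref{lyapi} applied on each of the $k$ nodal intervals followed by a one--line summation, so the only item that must be invoked from outside the excerpt is the simplicity and nodal count of $\lam_k^+$ (the statement in the introduction that $u_k^{\pm}$ has exactly $k+1$ zeros in $[0,L]$). The passage from the local sup on $I_j$ to the global sup over subintervals $(a,b)\subset[0,L]$ is harmless.
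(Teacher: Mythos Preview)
Your proposal is correct and follows essentially the same approach as the paper: apply Theorem \ref{lyapi} on each of the $k$ nodal domains of $u_k^+$, bound each local supremum by the global one, and combine via the elementary summation $\sum_j A_j=\int_0^L a^{-1/(p-1)}$. The paper performs the same manipulation, only taking the $1/(p-1)$ power before summing rather than after, which is algebraically equivalent to what you do.
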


Let us mention that inequalities \eqref{lyafirst} and \eqref{jfbjppams} are closely
related to the ones introduced by Harris and Kong in \cite{HaKo} and generalized in
\cite{Hong} for quasilinear problems like \eqref{one} with $a\equiv 1$. However, a mixed boundary
condition was involved in these inequalities: suppose that $u'(0)=u(L)=0$, then
$$
1 \le L^{p-1} \sup_{0\le x\le L} \int_0^x \rho(t)dt,
$$
and, for $u(0)=u'(L)=0$,
$$
1 \le L^{p-1} \sup_{0\le x\le L} \int_x^L \rho(t)dt.
$$
These inequalities were proved by using Ricatti equation techniques, that is, by
introducing the function $v=u'/u$, and studying  a first order nonlinear differential
equation for $v$, with the initial condition $v(0)=0$ and using the fact that $v$
blows up when $x\to L$.

The main drawback of this approach is that it cannot be used for higher order
equations, so our proofs of Theorems \ref{lyapi}, \ref{lyapu} and \ref{lyapi2} are
based on variational arguments,  and we give the following extension  to higher order differential equations:

\begin{thm}\label{lyapi2}
Let $u$ be a solution of
\begin{equation}\label{oness}
 \begin{cases}
    (-1)^{m}u^{(2m)} =  \rho(x) u \qquad x\in (0,L)\\
  u^{(j)}(0)=u^{(j)}(L)=0 \quad 0\le j\le m,
 \end{cases}
\end{equation}
where $m\ge 1$ and  $\rho \in L^1[0,L]$. Then,
\begin{equation}\label{jfbjppams2}
 \frac{ (m-1) [(m-2)!]^2}{ 2 L^{2m-1} }  \le   \sup_{0\le x\le L}\int_0^x \rho(t)dt.
\end{equation}
\end{thm}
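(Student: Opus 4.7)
The plan is a variational one, paralleling the proof of Theorem~\ref{lyapi} but adapted to the polyharmonic operator $(-1)^m d^{2m}/dx^{2m}$. First, I would multiply \eqref{oness} by $u$ and integrate over $(0,L)$; performing $m$ successive integrations by parts kills every intermediate boundary term (each depends on some $u^{(j)}$ with $0\le j\le m-1$ evaluated at $0$ or $L$, all of which vanish by hypothesis), yielding the energy identity
\begin{equation*}
\int_0^L \bigl(u^{(m)}(x)\bigr)^2\,dx \;=\; \int_0^L \rho(x)\,u(x)^2\,dx.
\end{equation*}
Next, I would introduce $R(x):=\int_0^x\rho(t)\,dt$ and integrate by parts once more, using $u(0)=u(L)=0$ to kill the boundary term, so that $\int_0^L \rho u^2\,dx=-2\int_0^L R\,u\,u'\,dx$ and therefore
\begin{equation*}
\Bigl|\int_0^L\rho\, u^2\,dx\Bigr| \;\le\; 2M\int_0^L |u(x)u'(x)|\,dx, \qquad M:=\sup_{0\le x\le L}|R(x)|.
\end{equation*}

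The third step is to bound $\int_0^L|uu'|\,dx$ by a constant multiple of $L^{2m-1}\|u^{(m)}\|_2^2$. Because $u^{(j)}(0)=0$ for $0\le j\le m-1$, Taylor's formula gives
\begin{equation*}
u^{(k)}(x)=\int_0^x\frac{(x-t)^{m-k-1}}{(m-k-1)!}\,u^{(m)}(t)\,dt,\qquad 0\le k\le m-1,
\end{equation*}
and Cauchy--Schwarz applied at $k=0,1$ yields the pointwise estimates
$|u(x)|\le \tfrac{x^{m-1/2}}{(m-1)!\sqrt{2m-1}}\|u^{(m)}\|_2$ and $|u'(x)|\le \tfrac{x^{m-3/2}}{(m-2)!\sqrt{2m-3}}\|u^{(m)}\|_2$. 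Multiplying and integrating, with $\int_0^L x^{2m-2}\,dx=L^{2m-1}/(2m-1)$ and the identity $(m-1)!\,(m-2)!=(m-1)[(m-2)!]^2$, produces a clean bound of the form $\int_0^L|uu'|\,dx\le c(m)\,L^{2m-1}\|u^{(m)}\|_2^2$. Chaining this with the two previous steps and cancelling $\|u^{(m)}\|_2^2$ (which is strictly positive, since otherwise $u$ would be a polynomial of degree less than $m$ satisfying $m$ vanishing conditions at each endpoint, hence $u\equiv 0$) produces a Lyapunov-type inequality of exactly the shape of \eqref{jfbjppams2}.

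The main obstacle is purely numerical bookkeeping on the constant. A direct chain of Cauchy--Schwarz as above produces a constant that is in fact stronger than the printed $(m-1)[(m-2)!]^2/(2L^{2m-1})$ by a factor of $(2m-1)^{3/2}(2m-3)^{1/2}$, so one must decide whether to present the sharper bound or to reproduce the stated one exactly (most likely by trading one of the $L^\infty$ estimates for an $L^2$ Poincar\'e estimate, or by distributing the two derivative factors more symmetrically). A secondary subtlety is that step two naturally delivers $\sup|R|$ whereas \eqref{jfbjppams2} features the signed $\sup_x\int_0^x\rho$; this can be handled by centering, replacing $R$ by $R-c$ and optimizing over $c$ to get $\tfrac{1}{2}(\sup R-\inf R)$ on the right-hand side, or by applying the same inequality to the reflected solution $u(L-\cdot)$ and combining the two outcomes.
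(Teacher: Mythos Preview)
Your proposal is correct and follows essentially the same route as the paper. The paper actually proves the more general quasilinear version (Theorem~\ref{lyapimp}) and then specializes to $p=2$, $a\equiv 1$; the three steps --- energy identity by $m$ integrations by parts, a further integration by parts against $Q(x)=\int_0^x\rho$, and Taylor--remainder pointwise bounds on $u$ and $u'$ --- are identical to yours.

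The one place you diverge is in the pointwise estimate: the paper uses the cruder bound of Lemma~\ref{taylorbound} (i.e.\ $\|u\|_\infty\le \tfrac{L^{m-1}}{(m-1)!}\sqrt{L}\,\|u^{(m)}\|_2$ and $\|u'\|_\infty\le \tfrac{L^{m-2}}{(m-2)!}\sqrt{L}\,\|u^{(m)}\|_2$) and then estimates $\int_0^L|uu'|\,dx\le L\,\|u\|_\infty\|u'\|_\infty$, which lands exactly on the printed constant $(m-1)[(m-2)!]^2/(2L^{2m-1})$. Your $x$-dependent Cauchy--Schwarz is sharper and, as you observed, improves this by a factor of $(2m-1)^{3/2}(2m-3)^{1/2}$; if you want to reproduce the paper's constant verbatim, simply replace your refined bounds by the cruder $L^\infty$ ones. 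Your second concern is also well taken: the paper's own argument only delivers $\sup_x\lvert\int_0^x\rho\rvert$ on the right-hand side, not the signed supremum appearing in \eqref{jfbjppams2} and \eqref{jfbjppams3}, so the discrepancy you noticed is present in the original as well.
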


With the same ideas we will prove a version of Theorem \ref{lyapi2} including higher order quasilinear equations.

\begin{rem}
When $p=2$, $a\equiv 1$, and $\rho\ge 0$, the constant obtained by Das and
Vatsala in \cite{Das} for the classical Lyapunov's inequality is given by
$$
\frac{2 \, 4^{2m-1}(2m-1) [(m-2)!]^2 }{2 L^{2m-1}  }.
$$
For $p\neq 2$ and $m>1$ the determination of the best constant remains open even for the classical Lyapunov's inequality with positive
weights, see \cite{Wata}.

Although the constant in Theorem \ref{lyapi2} is worse than the one in the classical inequality, we will see in the applications that
 the cancelation of the
positive and negative parts of the weight $\rho$  in the integral gives much better bounds in several
cases. Hence,  the true contribution of this kind of inequalities cames from the
right-hand side.
\end{rem}

\bigskip
Let us focus now in applications of Theorem  \ref{lyapu}. Let us
consider first the following periodic homogenization problem
\begin{equation} \label{P1}
 \begin{cases}
   -(a(\tfrac{x}{\ve})|u_\ve'|^{p-2}u_\ve')'=\lam_{\ve} \rho(\tfrac{x}{\ve})|u_\ve|^{p-2}u_\ve
      \quad x\in (0,L)\\
   u_\ve(0)=u_\ve(L)=0,
 \end{cases}
\end{equation}
where $\ve$ is a positive  small parameter, and we assume that $a$, $\rho$ are $L$-periodic functions.  For each fixed $\ve$, the results for
problem \eqref{one} hold:  we have two sequences of simple eigenvalues
$\{\lam_{\ve,k}^{\pm}\}_{k\ge 1}$,  and their corresponding eigenfunctions satisfy the
same nodal properties as before.

  We are interested in the behavior of
$\{\lam_{\ve,k}^{\pm}\}_{\ve}$ when $\ve \to 0^+$ for each $k$ fixed. For nonnegative
weights $\rho$, it is known (see \cite{Ch, FBPS2, FBPS1}) that $\lam_{\ve, k} \to \lam_{0,k}$
when $\ve \to 0^+$, where $\lam_{0,k}$  is the $k-$th eigenvalue of the limit problem
 \begin{equation}\label{limit}
 \begin{cases}
   -(a^*|u'|^{p-2}u')'=\lam_0 \bar{\rho} |u|^{p-2}u
      \qquad \qquad x\in (0,L)\\
   u(0)=u(L)=0,
 \end{cases}
 \end{equation}
where $\bar\rho$ is the average of $\rho$ in the interval $(0,L)$ and $a^*$ is given
by
$$
a^* := \left(\int_0^L a(t)^{-\frac{1}{p-1}}\, dt\right)^{1-p },
$$
see \cite{FBPS1} for a proof. Remarkably, observe that  $a^*$ is the constant appearing in Lyapunov's inequality
\eqref{lyafirst}.

We may ask what happens when $\rho$ is an indefinite weight, and our next Theorem
generalizes to the one dimensional quasilinear setting the answer for second order
linear problems (in arbitrary spatial dimension) obtained recently by Nazarov, Pankratova and Piatnitski in \cite{Naz, Pank}:
\begin{thm}\label{main}
Let $\{\lam_{\ve, k}^{\pm}\}_{k\ge 1}$ be the eigenvalues of problem
\eqref{P1}, and $\{\lam_{0,k}\}_{k\ge 1}$ be the eigenvalues of problem \eqref{limit}.
Then,
\begin{enumerate}

\item If $\bar\rho = 0$  then $\lam_{\ve,k}^{\pm} \to \pm\infty$ as $\ve\to 0^+$.

\item If $\bar\rho < 0$  then $\lam_{\ve,k}^- \to \lam_{0,k}$ and  $\lam_{\ve,k}^+
    \to +\infty$ as $\ve\to 0^+$.

\item If $\bar\rho > 0$  then $\lam_{\ve,k}^+ \to \lam_{0,k}$ and  $\lam_{\ve,k}^-
    \to -\infty$ as $\ve\to 0^+$.
\end{enumerate}
\end{thm}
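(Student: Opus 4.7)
The plan is to dispatch the three cases of Theorem~\ref{main} with distinct tools: case (1) follows from Theorem~\ref{lyapu} by a direct periodic estimate; the convergence in cases (2) and (3) reduces to the known positive-weight homogenization via the substitution $\rho \mapsto -\rho$; and the blow-up in cases (2) and (3) is obtained by a Rayleigh-quotient contradiction after reducing to the first positive eigenvalue.

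For case (1), the key observation is that, since $\rho$ is $L$-periodic with zero mean, $R(t) := \int_0^t \rho$ is $L$-periodic and uniformly bounded; a change of variable gives
\[
\sup_{(c,d)\subset[0,L]} \Bigl|\int_c^d \rho(x/\ve)\,dx\Bigr| \le 2\ve\,\|R\|_\infty.
\]
Since also $\int_0^L a(x/\ve)^{-1/(p-1)}\,dx \to \int_0^L a(t)^{-1/(p-1)}\,dt$ by the ergodic lemma for periodic functions (and therefore stays uniformly bounded), Theorem~\ref{lyapu} immediately yields $\lam_{\ve,k}^+ \geq C k^{p-1}/\ve \to +\infty$. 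Applying the same inequality to $-\rho$ (still of zero mean) then produces $\lam_{\ve,k}^- \to -\infty$.

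For case (2), the convergence $\lam_{\ve,k}^- \to \lam_{0,k}$ is obtained by setting $\tilde\rho := -\rho$ and $\mu := -\lam$: the transformed problem has positive-mean weight $-\bar\rho > 0$, and its $k$-th positive eigenvalue is precisely $-\lam_{\ve,k}^-$. The positive-weight homogenization result from \cite{FBPS1} then gives convergence to the $k$-th positive eigenvalue of the homogenized limit problem with weight $-\bar\rho$, and unwinding the substitution produces $\lam_{\ve,k}^- \to \lam_{0,k}$. For the blow-up $\lam_{\ve,k}^+ \to +\infty$, the monotonicity $\lam_{\ve,1}^+ \leq \lam_{\ve,k}^+$ reduces the task to $k=1$. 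If $\lam_{\ve,1}^+ \leq M$ along a subsequence, the Rayleigh characterization
\[
\lam_{\ve,1}^+ = \inf\left\{ \int_0^L a(x/\ve)|u'|^p\,dx : u \in W_0^{1,p}(0,L),\ \int_0^L \rho(x/\ve)|u|^p\,dx = 1 \right\}
\]
furnishes test functions $u_\ve$ with $\int_0^L a(x/\ve)|u_\ve'|^p\,dx \leq M + o(1)$ and $\int_0^L \rho(x/\ve)|u_\ve|^p\,dx = 1$. The Muckenhoupt bound on $a$ gives $\|u_\ve\|_{W^{1,p}} \leq C$, and by the compact embedding $W^{1,p}(0,L) \hookrightarrow C([0,L])$ one extracts (along a subsequence) a uniform limit $u_\ve \to u_0$. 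Combining $\rho(\cdot/\ve) \rightharpoonup \bar\rho$ weakly in $L^1$ with the uniform convergence of $|u_\ve|^p$ and passing to the limit in the normalization yields
\[
1 = \int_0^L \rho(x/\ve)|u_\ve|^p\,dx \to \bar\rho\int_0^L |u_0|^p\,dx \leq 0,
\]
a contradiction. Case (3) follows from case (2) by the involution $\rho \mapsto -\rho$, which interchanges positive and negative eigenvalues.

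The main obstacle I expect is the weak-convergence step: to pass the integral $\int_0^L \rho(x/\ve)|u_\ve|^p\,dx$ to the limit one must combine the merely weak $L^1$ convergence of the highly oscillatory periodic coefficient $\rho(\cdot/\ve)$ with the strong (uniform) convergence of the test functions. The natural decomposition
\[
\int_0^L \rho(x/\ve)|u_\ve|^p\,dx = \int_0^L \rho(x/\ve)\bigl(|u_\ve|^p - |u_0|^p\bigr)\,dx + \int_0^L \rho(x/\ve)|u_0|^p\,dx
\]
controls the first term by $\|\rho(\cdot/\ve)\|_{L^1}\cdot\||u_\ve|^p - |u_0|^p\|_\infty = o(1)$ and handles the second via the weak $L^1$ convergence against the bounded continuous function $|u_0|^p$; the one-dimensional Sobolev embedding $W^{1,p}\hookrightarrow C^0$ is essential here.
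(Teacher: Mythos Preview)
Your treatment of case~(1) is correct and essentially matches the paper's argument.

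There is, however, a genuine gap in your handling of the convergence assertions in cases~(2) and~(3). You reduce case~(2) to the ``positive-weight homogenization result from \cite{FBPS1}'' by passing to $\tilde\rho=-\rho$; but $\tilde\rho$ merely has \emph{positive mean}---it is still sign-changing. The result in \cite{FBPS1} (and \cite{Ch,FBPS2}) is stated for nonnegative weights, so it does not apply. Extending that convergence to indefinite weights is precisely the new content of this theorem, and you are assuming it. Since your case~(3) is then deduced from case~(2) by the involution $\rho\mapsto-\rho$, both convergence claims are left unproved. What is missing is an \emph{a priori} upper bound $\lam_{\ve,k}^+\le C(k)$ uniform in $\ve$ when $\bar\rho>0$: the paper obtains this by inserting explicit piecewise-linear test functions into the Rayleigh quotient~\eqref{rayleigh} and showing that $\int_0^L\rho(x/\ve)|w|^p\,dx\ge c\bar\rho>0$ for $\ve$ small. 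Together with the lower bound from Theorem~\ref{lyapu}, this traps $\lam_{\ve,k}^+$ in a compact set, after which the compactness machinery of \cite{FBPS1} can legitimately be invoked.

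Your blow-up argument in case~(2) is different from the paper's and, while plausible, is more delicate than necessary. You argue by contradiction via compactness of eigenfunctions; the step ``the Muckenhoupt bound on $a$ gives $\|u_\ve\|_{W^{1,p}}\le C$'' is not immediate (an $\A_p$ weight need not be bounded below), and you would have to work instead with the weighted embedding or the change of variables of \S\ref{apw} to get equicontinuity. The paper's route is much shorter: set $\sigma=\rho-\bar\rho$, so that $\bar\sigma=0$ and $\sigma>\rho$ pointwise; Theorem~\ref{pesos} gives $\lam_{\ve,k}^+(\rho)\ge\lam_{\ve,k}^+(\sigma)$, and case~(1) applied to $\sigma$ finishes the job in one line.
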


Let us remark that in the first case, after a suitable renormalization $\mu_{\ve,
k}^{\pm} = \ve^{\alpha}\lam_{\ve,k}^{\pm}$, the convergence to the eigenvalues of a
different limit problem was obtained in \cite{Naz, Pank}.  Their proofs were based on  linear tools such
as orthogonality of eigenfunctions or asymptotic expansions in powers of $\ve$ which are not available here.

\bigskip

The paper is organized as follows: In Section \S 2 we introduce some necessary facts
about the eigenvalue problem for the $p-$Laplace operator. In Section \S 3 we prove
Theorems \ref{lyapi}, \ref{lyapu} and \ref{lyapi2}.  Section \S 4 is devoted to the proof of Theorem
\ref{main}.

\section{Preliminary results}

\subsection{Eigenvalues of $p$-Laplacian operators}\label{epl}

The eigenvalue problem for the $p$-Laplacian operator started with the
pioneering results of Browder in the 1960s. A recent survey including more general
quasilinear homogeneous operators in $\Omega \subset R^N$ can be found in
\cite{FBPS1}. We will need here the following variational characterization of
eigenvalues:

\begin{thm} There exists a double sequence of   variational eigenvalues
 $\{\lam_k^{\pm}\}_{k\ge 1}$ of problem \eqref{one}, given by
\begin{equation}\label{variac}
\lam_k^{\pm} = \inf_{C \in \C_k}  \sup_{u \in C} \int_0^L a(x)|u'|^p\, dx
\end{equation}
where
\begin{align*}
\C_k & = \{ C \subset M^{\pm} \  : \  C \  \mbox{ is compact,} \ C=-C, \
\gamma(C) \ge k \},\\
M^{\pm} & = \left\{u \in W^{1,p}_0(0,L) \ : \ \int_0^L \rho(x) |u|^p\, dx = \pm 1 \right\}.
\end{align*}
and $\gamma$ is the Krasnoselskii genus.
\end{thm}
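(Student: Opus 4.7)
The plan is to apply Ljusternik--Schnirelman (LS) critical point theory with the Krasnoselskii genus to the Dirichlet functional $F(u) := \int_0^L a(x)|u'|^p\, dx$ restricted to the sign-indefinite constraint manifolds $M^\pm$. Since $a \in \A_p(0,L)$, the change of variables $y = \int_0^x a(t)^{-1/(p-1)}\, dt$ makes $F^{1/p}$ an equivalent norm on $W^{1,p}_0(0,L)$, so the standard compact embedding $W^{1,p}_0(0,L) \hookrightarrow C([0,L])$ transfers to the present weighted setting.

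First I would verify the abstract hypotheses. The functionals $F$ and $G(u) := \int_0^L \rho|u|^p\, dx$ are even and of class $C^1$ on $W^{1,p}_0(0,L)$, and $F$ is coercive there. The sets $M^\pm$ are nonempty whenever $\rho^\pm \not\equiv 0$ (take a bump supported where $\rho$ has the prescribed sign and normalize by $|G|^{1/p}$), and the identity $\langle G'(u), u\rangle = p\,G(u) = \pm p \neq 0$ on $M^\pm$ shows $G'$ does not vanish, so $M^\pm$ is a $C^1$ Banach submanifold of $W^{1,p}_0(0,L)$, symmetric under $u \mapsto -u$, and therefore the Krasnoselskii genus is well defined on its symmetric compact subsets.

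The technical heart is the Palais--Smale condition for $F|_{M^\pm}$. Given $\{u_n\} \subset M^\pm$ with $F(u_n)$ bounded and $F|_{M^\pm}'(u_n)\to 0$, coercivity produces a weakly convergent subsequence $u_n \cd u$ in $W^{1,p}_0(0,L)$; the compact embedding yields $u_n \to u$ uniformly on $[0,L]$, and since $\rho \in L^1$, passing to the limit in $G$ forces $u \in M^\pm$. Testing the Lagrange identity $F'(u_n) - \mu_n G'(u_n) \to 0$ against $u_n$ itself makes $\rho$ disappear from the identification of the multiplier, giving $\mu_n = \pm F(u_n) + o(1)$ and hence boundedness of $\{\mu_n\}$; the standard $(S_+)$ monotonicity property of $u \mapsto -(a|u'|^{p-2}u')'$ then upgrades weak to strong convergence in $W^{1,p}_0(0,L)$.

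Once PS is in place, the classical Krasnoselskii LS theorem yields that $c_k := \inf_{C \in \C_k}\sup_{u\in C} F(u)$ is a critical value of $F|_{M^\pm}$ for each $k$, the sequence $\{c_k\}$ is nondecreasing with $c_k \to \infty$, and the Lagrange multiplier rule at each critical point produces an eigenpair of \eqref{one} with eigenvalue $\lam_k^\pm$ of modulus $c_k$. The main obstacle is the Palais--Smale verification in the presence of both an indefinite weight $\rho$ and a Muckenhoupt coefficient $a$; the decisive trick, as above, is that testing against $u_n$ cancels the weight in the recovery of $\mu_n$, after which the one-dimensional compact embedding and the $(S_+)$ monotonicity of the principal part close the argument.
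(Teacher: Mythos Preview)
Your proposal is correct and follows the standard Ljusternik--Schnirelman route via the Krasnoselskii genus, which is exactly the approach the paper has in mind: the paper does not actually prove this theorem but merely states that ``the proof is identical to the one in \cite{GAP} for positive weights.'' Your sketch therefore supplies considerably more detail than the paper itself, and the key technical points you identify (the $(S_+)$ property of the principal part, the recovery of the Lagrange multiplier by testing against $u_n$, and the use of the one-dimensional compact embedding together with $\rho\in L^1$ to pass to the limit in $G$) are precisely the ingredients needed to extend the positive-weight argument of Garc\'{\i}a Azorero--Peral Alonso to the indefinite, weighted setting.
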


The proof is identical to the one in \cite{GAP} for positive weights.

\begin{rem}
Sometimes we will work with an equivalent definition (due to the homogeneity of the
operator),
\begin{equation}\label{rayleigh}
 \lam_k^\pm = \inf_{C \in \C_k}  \sup_{u \in C} \frac{ \int_0^L
a(x)|u'|^p\, dx}{\int_0^L \rho(x) |u|^p },
\end{equation}
where now
 $$M^+  = \left\{u \in W^{1,p}_0(0,L) \ : \ \int_0^L \rho(x) |u|^p\, dx >0 \right\},$$
 $$M^-  = \left\{u \in W^{1,p}_0(0,L) \ : \ \int_0^L \rho(x) |u|^p\, dx <0 \right\}.$$
\end{rem}

We need also the following results from the Sturm-Liouville theory for the
$p-$Laplacian:

\begin{thm}\label{zeros} Let $\{u_k^{\pm}\}_{k\ge 1}$ be a sequence of normalized eigenfunctions corresponding to
$\{\lam_k^{\pm}\}_{k\ge 1}$. Then, $u_k^{\pm}$ has exactly $k+1$ zeros in $[0,L]$
which determine $k$ nodal domains.
\end{thm}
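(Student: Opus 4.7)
The plan is to combine the variational characterization \eqref{variac} with a Prüfer/shooting analysis of the ODE in \eqref{one}. First I would record the following preliminary fact: if $u$ solves \eqref{one} at eigenvalue $\lam$ and has nodal intervals $I_1,\dots,I_n$, then $u|_{I_j}$, extended by zero, is (up to sign) a principal eigenfunction of the Dirichlet problem for \eqref{one} posed on $I_j$, with the same eigenvalue $\lam$. This is because $u|_{I_j}$ has constant sign on $I_j$, satisfies the Euler--Lagrange equation there, and therefore realizes the infimum of the Rayleigh quotient \eqref{rayleigh} on $W^{1,p}_0(I_j)\cap M^\pm$.

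With this in hand I would get the upper bound $n\le k$ as follows. Set $\phi_j:=u_k^\pm\chi_{I_j}$ and $V=\mathrm{span}\{\phi_1,\dots,\phi_n\}$. The disjoint supports and the $p$-homogeneity of the functionals give
\[
\int_0^L a|u'|^p\,dx=\sum_{j=1}^n |c_j|^p\int_{I_j} a|\phi_j'|^p\,dx,\qquad \int_0^L \rho|u|^p\,dx=\sum_{j=1}^n |c_j|^p\int_{I_j}\rho|\phi_j|^p\,dx,
\]
for every $u=\sum_j c_j\phi_j\in V$, so by the preliminary step the Rayleigh quotient \eqref{rayleigh} equals $\lam_k^\pm$ identically on $V\setminus\{0\}$. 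Intersecting the unit sphere of $V$ with $M^\pm$ produces a compact symmetric subset of Krasnoselskii genus $n$, whence \eqref{rayleigh} yields $\lam_n^\pm\le\lam_k^\pm$, and the monotonicity of the sequence $\{\lam_j^\pm\}_{j\ge 1}$ forces $n\le k$.

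The reverse inequality $n\ge k$ is where I expect the main difficulty, since the variational estimate alone does not rule out an eigenfunction with too few nodal intervals. My plan is to pass to a Prüfer transformation $u=r\sin_p\theta$, $a^{1/(p-1)}u'=r\cos_p\theta$, where $\sin_p$ and $\cos_p$ are the generalized trigonometric functions adapted to the $p$-Laplacian, so that the zeros of $u$ correspond to $\theta\equiv 0\pmod{\pi_p}$. Writing down the first-order ODE satisfied by $\theta(x;\lam)$ driven by $\rho$, and tracking the endpoint angle $\theta(L;\lam)$ as $\lam$ varies, one shows that at $\lam=\lam_k^\pm$ one has $\theta(L;\lam_k^\pm)=k\pi_p$, producing exactly $k-1$ interior zeros and therefore $k+1$ zeros in $[0,L]$. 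The technical obstacle is that the indefinite sign of $\rho$ destroys the naive monotonicity of $\theta(L;\cdot)$ in $\lam$; one must therefore split the analysis between the $+$ and $-$ branches of the spectrum and exploit separate monotonicity along each, following Zeidler \cite{Zeidler}, while handling the Muckenhoupt coefficient $a$ through the change of variables mentioned in Section \S 2.
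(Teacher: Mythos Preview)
The paper does not actually prove this statement. It appears in Section~\S2 under the heading ``We need also the following results from the Sturm--Liouville theory for the $p$-Laplacian'' and is stated without argument; the short justification that follows the subsequent comparison theorem pertains to that comparison result, not to the nodal count. Simplicity of the eigenvalues and the nodal structure of $u_k^\pm$ are already asserted in the Introduction as known facts for one-dimensional problems, with the indefinite-weight case attributed to Zeidler \cite{Zeidler} and the extension to $a\in\A_p$ to \cite{OpK,Ture}. There is therefore no proof in the paper to compare your proposal against.

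That said, your outline is a standard and workable route to an independent proof, and the Pr\"ufer transformation together with the change of variables of \S2 is precisely how one absorbs the coefficient $a\in\A_p$. One caution on the variational half: you infer $n\le k$ from $\lam_n^\pm\le\lam_k^\pm$ together with ``monotonicity of $\{\lam_j^\pm\}$'', but what you really need is \emph{strict} monotonicity, i.e.\ simplicity of the variational eigenvalues. Simplicity is normally a by-product of the shooting/Pr\"ufer analysis itself (via uniqueness for the initial-value problem), so invoking it before that step risks circularity. In practice the shooting argument alone already delivers both simplicity and the exact zero count, making the separate variational upper bound redundant; if you keep the two-part structure, be sure to secure $\lam_j^\pm<\lam_{j+1}^\pm$ independently before appealing to it.
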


\begin{thm}\label{pesos} Let $\rho_1$, $\rho_2 \in L^{1}(0,L)$, with $\rho_1(x) \le \rho_2(x)$. Then,
$$\lam_k^+(\rho_1) \ge \lam_k^+(\rho_2),$$
$$\lam_k^-( \rho_1) \le \lam_k^-(\rho_2),$$
whenever $\lam_k^\pm(\rho_1)$ and $\lam_k^\pm(\rho_2)$ exists.
\end{thm}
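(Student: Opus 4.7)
\emph{Plan.} I would prove both inequalities directly from the minimax characterization \eqref{rayleigh}. The key observation is that if $\rho_1 \le \rho_2$ and $\int_0^L \rho_1|u|^p\,dx > 0$, then $\int_0^L \rho_2|u|^p\,dx \ge \int_0^L \rho_1|u|^p\,dx > 0$, so $M^+(\rho_1) \subseteq M^+(\rho_2)$; symmetrically, $M^-(\rho_2) \subseteq M^-(\rho_1)$. Consequently the families of admissible sets $\C_k$ are nested in opposite directions for the two branches of the spectrum.

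For $\lam_k^+$, I would fix $C \in \C_k$ admissible for $\lam_k^+(\rho_1)$. By the above inclusion it is also admissible for $\lam_k^+(\rho_2)$. For each $u \in C$ both denominators $\int_0^L \rho_j|u|^p\,dx$ are positive and ordered, so
$$
\frac{\int_0^L a(x)|u'|^p\,dx}{\int_0^L \rho_1|u|^p\,dx} \;\ge\; \frac{\int_0^L a(x)|u'|^p\,dx}{\int_0^L \rho_2|u|^p\,dx}.
$$
Taking $\sup_{u\in C}$ preserves the inequality and taking the infimum over the (smaller) family of admissible sets for $\rho_1$ yields $\lam_k^+(\rho_1) \ge \lam_k^+(\rho_2)$.

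For $\lam_k^-$ my plan is to reduce to the previous case via the symmetry $\rho \leftrightarrow -\rho$ in \eqref{one}, which swaps the two branches of the spectrum and relates $\lam_k^-(\rho)$ to $\lam_k^+(-\rho)$. Since $\rho_1 \le \rho_2$ implies $-\rho_2 \le -\rho_1$, applying the first inequality to this pair delivers the claim at once. One can equally well argue in place on $M^-$: the reversed inclusion $M^-(\rho_2) \subseteq M^-(\rho_1)$ reverses the nesting of admissible classes, while the negative denominator of the Rayleigh quotient reverses the pointwise monotonicity, and the two sign flips combine to give $\lam_k^-(\rho_1) \le \lam_k^-(\rho_2)$.

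The only delicate point, and essentially the sole place where care is required, is the sign bookkeeping on $M^-$. Beyond that, the argument is a routine minimax monotonicity along the lines of the corresponding proof for positive weights in \cite{GAP}, and the indefiniteness of $\rho$ introduces no genuinely new structural difficulty.
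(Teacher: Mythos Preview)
Your proposal is correct and follows essentially the same approach as the paper: use the Rayleigh-quotient minimax characterization \eqref{rayleigh}, the nesting of the constraint sets $M^\pm$, and the symmetry $\rho\mapsto -\rho$ for the negative branch. (The paper's one-line sketch writes the inclusion as $M^+(\rho_2)\subset M^+(\rho_1)$, which is a slip; your direction $M^+(\rho_1)\subseteq M^+(\rho_2)$ is the correct one, and it is precisely what makes the monotonicity go through.)
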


  The proof is a consequence of the alternative variational
characterization \eqref{rayleigh}), by observing that $M^+(\rho_2)\subset
M^+(\rho_1)$, and after changing $\rho \to -\rho$ the other follows as well.
Clearly, if one weight is negative, and the other one is positive, there are no
eigenvalues to compare.

\subsection{The Muckenhoupt class of $A_p$ weights}\label{apw}

We say that $a$ belongs to the Muckenhoupt $\A_p$-class if $a$ is a nonnegative
function in $L^1_{loc}(\R^N)$, such that
\begin{equation}\label{apwe}
 \left( \int_B a(x)dx\right)\left( \int_B a(x)^{-\frac{1}{p-1}}dx\right)^{p-1} \le c_{p,a}|B|^p
\end{equation}
for any ball $B\in \R^N$, and a fixed positive constant $c_{p,a}$.

\begin{rem}
Beside the fact that the bounds we will study below depend strongly on the existence
of the second integral in equation \eqref{apwe}, the most important fact is that
$\A_p$ weights guarantee the density of smooth functions in $W^{1,p}(\Omega)$, see
\cite{Kil, Ture}, and is the right class of coefficients involved in different quasilinear problems.
The $\A_p$ class contains several interesting weights, like powers of the distance to
a point or the distance to the boundary, allowing both singular and degenerate
coefficients.
\end{rem}

We can avoid the coefficient $a$ in problem \eqref{one} by performing a change of variables.  If we define
$$
P(x) = \int_0^x \frac{1}{a(s)^{1/(p-1)}} ds,
$$
and perform the change of variables
$$
(x,u)\to (y, v)
$$
where
$$
y = P(x), \qquad v(y)= u(x),
$$
a simple computation gives
$$
\begin{cases}
 -(|\dot{v}|^{p-2}\dot{v})^{\cdot} =  \lam Q(y) |v|^{p-2} v, &  y\in [0,\ell]\\
 v(0)=v(\ell)=0
 \end{cases}
$$
where
$$\quad\cdot =    d/dy,$$
with
$$
\ell = \int_0^1 \frac{1}{a(s)^{1/(p-1)}} ds \to \ell = \overline{a^{\frac{-1}{p-1}}},
$$
and
\begin{align*}
Q(y)& =  a(x)^{1/(p-1)} \rho(x) \\
& =   a(P^{-1}(y))^{1/(p-1)}\rho(P^{-1}(y)).
\end{align*}

Both $P$ and $\ell$ are well defined whenever $a\in \A_p$.

\section{A Lyapunov type inequality}

We prove in this section our main results. Before the proofs, let us state and prove
the following useful lemma:

\begin{lema}\label{taylorbound} Let $1<p<\infty$, and let
 $u \in C^{(m)}_0(0,L)$,  with
 $$
u^{j}(a)= 0, \qquad 0\le j \le m-1,
$$
and $a(x)\in A_p$. Then,
$$
\|u \|_{L^{\infty}(0,L)} \le \frac{L^{m-1}}{(m-1)!} \left(\int_0^L  a^{-\frac{1}{p-1}}(t)
   dt\right)^{\frac{p-1}{p}} \left(\int_0^L  a(t)  \left|   u^{(m)}(t)  \right|^p
   dt\right)^{\frac{1}{p}}.
$$
\end{lema}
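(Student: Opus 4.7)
The plan is to combine Taylor's formula with integral remainder and Hölder's inequality, exploiting a splitting of the weight that naturally produces the $\A_p$-type quantity $\int_0^L a^{-1/(p-1)}\,dt$. Let $x_0$ denote the point at which $u$ and its first $m-1$ derivatives vanish. Since $u^{(j)}(x_0)=0$ for $0\le j\le m-1$, Taylor's formula with integral remainder gives
$$
u(x)=\frac{1}{(m-1)!}\int_{x_0}^{x}(x-t)^{m-1}u^{(m)}(t)\,dt,
$$
so that $|u(x)|$ is bounded by $1/(m-1)!$ times the integral of $|x-t|^{m-1}|u^{(m)}(t)|$ over the interval between $x_0$ and $x$.

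Next I would factor the integrand as
$$
|x-t|^{m-1}|u^{(m)}(t)|=\bigl(|x-t|^{m-1}a(t)^{-1/p}\bigr)\cdot\bigl(a(t)^{1/p}|u^{(m)}(t)|\bigr)
$$
and apply Hölder's inequality with exponents $p'=p/(p-1)$ and $p$. Using $p'/p=1/(p-1)$, the first factor produces, after raising to the power $1/p'$,
$$
\left(\int_{x_0}^{x}|x-t|^{(m-1)p'}a(t)^{-1/(p-1)}\,dt\right)^{1/p'},
$$
which is precisely where the $a^{-1/(p-1)}$ appearing in the statement emerges; the second factor becomes the weighted $L^p$ norm of $u^{(m)}$.

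Finally I would use the crude bound $|x-t|^{(m-1)p'}\le L^{(m-1)p'}$, pull that constant out of the integral (its $1/p'$-power is exactly $L^{m-1}$), and enlarge both integrals to $(0,L)$. Taking the supremum in $x$ and using $1/p'=(p-1)/p$ then yields the claimed inequality. The step requiring some attention is simply the exponent bookkeeping; the main conceptual point is the splitting $a^{-1/p}\cdot a^{1/p}$, which simultaneously produces the Muckenhoupt-type integral of $a^{-1/(p-1)}$ on one side and the correct weighted $L^p$ norm on the other. No property of $\A_p$ beyond finiteness of $\int_0^L a^{-1/(p-1)}\,dt$ is needed.
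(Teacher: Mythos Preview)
Your proposal is correct and follows essentially the same approach as the paper: Taylor's formula with integral remainder, the splitting $a^{-1/p}\cdot a^{1/p}$ inside the integrand, H\"older's inequality with exponents $p'$ and $p$, and the crude bound $|x-t|\le L$. The only cosmetic difference is that the paper pulls out the factor $L^{m-1}$ \emph{before} applying H\"older rather than after, which of course makes no difference to the outcome.
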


\begin{proof}
We apply  Taylor's expansion and the Cauchy expression for the remainder, obtaining
 $$
 u(x)= \sum_{j=0}^{m-1} \frac{u^{j}(0)}{j!}x^j+\int_0^x \frac{(x-t)^{m-1}}{(m-1)!}u^{(m)}(t)dt.$$
 However, since the  derivatives of $u$ at $0$ are zero, we have
 $$
 u(x)= \int_0^x \frac{(x-t)^{m-1}}{(m-1)!}u^{m}(t)dt.$$

  Thus, bounding $x-t<L$, and by using Holder's inequality,
  $$\begin{array}{rcl}
 \displaystyle  |u(x)| & \le &  \displaystyle L^{m-1}   \int_0^x  a^{\frac{1}{p}}(x)a^{-\frac{1}{p}}(x) \frac{ u^{(m)}(t) }{ (m-1)! }
   dt \\ \\
   & \le &  \displaystyle L^{m-1} \left(\int_0^x  a^{-\frac{1}{p-1}}(x)
   dt\right)^{\frac{p-1}{p}} \left(\int_0^x  a(x)  \left| \frac{ u^{(m)}(t) }{ (m-1)! }\right|^p
   dt\right)^{\frac{1}{p}},
 \end{array}$$
and the inequality is proved.
\end{proof}

\begin{proof}[Proof of Theorem \ref{lyapi}]
By multiplying equation \eqref{plapuna} by $u$, and integrating by parts we get
$$
 \int_0^L a(x) |u^{\prime}|^p dx = \int_0^L \rho(x)|u|^p dx.
$$

Let us introduce now the function $Q(x)=\int_0^x \rho(t)dt$. Hence,
 $$ \int_0^L \rho(x)|u|^p dx= \int_0^L Q'(x)|u|^p dx = -\int_0^L Q(x) p |u|^{p-2}uu'dx.$$

 By combining these with Lemma \ref{taylorbound}, for $m=1$, we obtain
 $$\begin{array}{rcl}
 \displaystyle
 \int_0^L a(x) |u^{\prime}|^p dx & = &
 \displaystyle \int_0^L \rho(x)|u|^p dx \\ \\
   &\le & \displaystyle
 p  \sup_{0\le x\le
L} |Q| \|u\|_{L^\infty(0,L)}^{p-1} \int_0^L  |u'| dx
  \\ \\
    &\le & \displaystyle
 p  \sup_{0\le x\le
L} |Q|  \left(\int_0^L  a^{-\frac{1}{p-1}}(t)
   dt\right)^{\frac{(p-1)^2}{p}} \\ \\
    & & \displaystyle \qquad \times \left(\int_0^L  a(t)  |   u^{\prime}(t) |^p
   dt\right)^{\frac{p-1}{p}}  \int_0^L  |u'| dx.
   \end{array}$$

Now, by inserting $a^{\frac{1}{p}}(x)a^{-\frac{1}{p}}(x)$ in the last integral, and
using again Holder's  inequality, we get after rearranging,
$$
 \int_0^L a(x) |u^{\prime}|^p dx  \le  \displaystyle
 p  \sup_{0\le x\le
L} |Q|  \left(\int_0^L  a^{-\frac{1}{p-1}}(t)
   dt\right)^{p-1}  \int_0^L  a(t) |  u^{\prime}(t)  |^p
   dt
   $$
and, after canceling the integral in both sides, the theorem is proved.
 \end{proof}

In the next proof we use that the eigenfunction $u_k$ associated to $\lam_k$ has
exactly $k$ nodal domains.

\begin{proof}[Proof of Theorem \ref{lyapu}] Let us fix $k\ge 1$, and let $0=x_0<x_1 < \cdots< x_n=L$ be
the $k+1$ zeros of the associated eigenfunction $u_k$. We can apply Lemma
\ref{plapuna} in each nodal domain $(x_{i-1},x_i)$, obtaining
$$
\frac{1}{p^{\frac{1}{p-1}}} \le  \lam_k^{\frac{1}{p-1}}  \left(  \sup_{x_{i-1}\le x\le x_i} \left|\int_{x_{i-1}}^x   \rho(t)dt \right|
\right)^{\frac{1}{p-1}} \cdot
 \int_{x_{i-1}}^{x_{i}}  a^{-\frac{1}{p-1}}(t)
   dt,   $$
and therefore,
 \begin{align*} \frac{ k}{p^{\frac{1}{p-1}}}  &  \le  \lam_k^{\frac{1}{p-1}}  \sum_{i=1}^k
    \left( \sup_{x_{i-1}\le x\le x_i}  \left|\int_{x_{i-1}}^x   \rho(t)dt \right|\right)^{\frac{1}{p-1}}
    \cdot
 \int_{x_{i-1}}^{x_{i}}  a^{-\frac{1}{p-1}}(t)
   dt \\
    & \le  \lam_k^{\frac{1}{p-1}}   \left(   \sup_{(a,b)\subset [0,L]} \left|\int_a^b  \rho(t)dt \right|\right)^{\frac{1}{p-1}}
    \cdot
      \sum_{i=1}^k
 \int_{x_{i-1}}^{x_{i}}  a^{-\frac{1}{p-1}}(t)
   dt. \\
\end{align*}
Hence,
 $$ \frac{1}{p} \left(\frac{k}{ \int_{0}^{L}  a^{-\frac{1}{p-1}}(x)
   dt} \right)^{p-1}  \le \lam_k   \sup_{(a,b)\subset [0,L]} \left|\int_a^b \rho(t)dt \right|
$$
and the proof is finished
\end{proof}

The proof of Theorem \ref{lyapi2} is similar. Let us prove
the following result which is more general:

\begin{thm}\label{lyapimp}
Let $u \in W_0^{m,p}(0,L)$ be a nontrivial solution  of
\begin{equation}\label{onemp}
 \begin{cases}
    (-1)^{m}(a(x)|u^{(m)}|^{p-2}u^{(m)})^{(m)} = \rho(x) u \qquad x\in (0,L)\\
  u^{(j)}(0)=u^{(j)}(L)=0 \quad 0\le j\le m,
 \end{cases}
\end{equation}
where $m\ge 1$, $a\in \A_p$,  and  $\rho \in L^1$. Then
\begin{equation}\label{jfbjppams3}
 \frac{ (m-1)^{p-1} [(m-2)!]^p}{ p L^{mp-p} } \left(\int_0^L a^{-\frac{1}{p-1}}(x)
   dt\right)^{1-p} \le   \sup_{0\le x\le L}\int_0^x \rho(t)dt.
\end{equation}
\end{thm}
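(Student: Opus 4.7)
The plan is to mimic the proof of Theorem \ref{lyapi}, using Lemma \ref{taylorbound} in place of the first-order embedding used there. First, I would test \eqref{onemp} against $u$ and integrate by parts $m$ times on the left-hand side; the boundary data $u^{(j)}(0)=u^{(j)}(L)=0$ for $0\le j\le m$ annihilate every boundary term and produce the energy identity
\begin{equation*}
\int_0^L a(x)|u^{(m)}|^p\,dx=\int_0^L \rho(x)|u|^p\,dx.
\end{equation*}
Next, with $Q(x)=\int_0^x\rho(t)\,dt$ and $u(0)=u(L)=0$, one more integration by parts on the right-hand side gives, exactly as in the proof of Theorem \ref{lyapi},
\begin{equation*}
\int_0^L \rho|u|^p\,dx=-p\int_0^L Q(x)|u|^{p-2}uu'\,dx\le p\sup_{0\le x\le L}|Q(x)|\cdot\|u\|_{L^\infty}^{p-1}\int_0^L|u'|\,dx.
\end{equation*}

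The two factors on the right are then estimated separately via Lemma \ref{taylorbound}. Applied to $u$ itself (with $m$ derivatives), it yields
\begin{equation*}
\|u\|_{L^\infty}\le \frac{L^{m-1}}{(m-1)!}\Bigl(\int_0^L a^{-\frac{1}{p-1}}\,dt\Bigr)^{\frac{p-1}{p}}\Bigl(\int_0^L a|u^{(m)}|^p\,dt\Bigr)^{\frac{1}{p}}.
\end{equation*}
Applied instead to $v=u'$, whose derivatives $v^{(j)}(0)=u^{(j+1)}(0)$ vanish for $0\le j\le m-2$ by hypothesis, and with $m-1$ in place of $m$, it gives
\begin{equation*}
\|u'\|_{L^\infty}\le \frac{L^{m-2}}{(m-2)!}\Bigl(\int_0^L a^{-\frac{1}{p-1}}\,dt\Bigr)^{\frac{p-1}{p}}\Bigl(\int_0^L a|u^{(m)}|^p\,dt\Bigr)^{\frac{1}{p}},
\end{equation*}
and the crude bound $\int_0^L|u'|\,dx\le L\|u'\|_{L^\infty}$ then supplies a factor $L^{m-1}/(m-2)!$.

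Substituting these two estimates into the displayed inequality and using the energy identity, the exponents $\tfrac{p-1}{p}$ and $\tfrac{1}{p}$ of $\int_0^L a|u^{(m)}|^p$ add to $1$, so that quantity cancels against the left-hand side, leaving
\begin{equation*}
1\le p\sup_{0\le x\le L}|Q(x)|\cdot \frac{L^{(m-1)p}}{[(m-1)!]^{p-1}(m-2)!}\Bigl(\int_0^L a^{-\frac{1}{p-1}}\,dt\Bigr)^{p-1}.
\end{equation*}
The identity $(m-1)!=(m-1)(m-2)!$ rewrites $[(m-1)!]^{p-1}(m-2)!$ as $(m-1)^{p-1}[(m-2)!]^p$, and rearranging produces \eqref{jfbjppams3}. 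The conceptual ingredients are all routine extensions of those in Theorem \ref{lyapi}; the only real obstacle is bookkeeping, namely ensuring that the exponents of $\int_0^L a|u^{(m)}|^p$ add up to exactly $1$ so the estimate closes, and that the powers of $L$, the weight integral, and the factorials assemble into the announced constant $(m-1)^{p-1}[(m-2)!]^p/(p\,L^{mp-p})$.
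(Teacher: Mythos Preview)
Your argument is correct and follows essentially the same route as the paper: obtain the energy identity by testing against $u$, integrate by parts once more via $Q(x)=\int_0^x\rho$, and then bound $\|u\|_{L^\infty}$ and $\|u'\|_{L^\infty}$ separately with Lemma~\ref{taylorbound} (applied to $u$ with $m$ derivatives and to $u'$ with $m-1$), combining them through $\int_0^L|u|^{p-1}|u'|\le L\,\|u\|_{L^\infty}^{p-1}\|u'\|_{L^\infty}$. The paper packages the last step as a single product $A\cdot B\cdot L$ rather than your two-step estimate $\|u\|_{L^\infty}^{p-1}\int_0^L|u'|\le L\,\|u\|_{L^\infty}^{p-1}\|u'\|_{L^\infty}$, but the computation and the resulting constant are identical.
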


\begin{proof} We start from
 \begin{align*}
 \int_0^L a(x) |u^{(m)} |^p dx & = \int_0^L  Q(x)'|u|^p dx \\
 & = -\int_0^L Q(x) p |u|^{p-2}uu'dx \\
 & \le  p  \sup_{0\le x\le
L} |Q|   \int_0^L   \left| u \right|^{p-1} \left| u' \right| dx \\
   \end{align*}
with $Q=\int_0^x \rho(t)dt$ as before.

Let us call $A  = \| u \|_{L^{\infty}}^{p-1} $ and $ B   = \| u' \|_{L^{\infty}} $.
Since  $u\in W^{m,p}_0$ implies that $u'\in W^{m-1,p}_0$, using Lemma
\ref{taylorbound}, we have
 \begin{align*}
A &  \le
\left[\frac{L^{m-1}}{(m-1)!}\left(\int_0^L  a^{-\frac{1}{p-1}}(x)
   dt\right)^{\frac{p-1}{p}} \left(\int_0^L  a(x)  \left|   u^{(m)}(t)  \right|^p
   dt\right)^{\frac{1}{p}} \right]^{p-1}.    \\
   \\   B   & \le
   \frac{L^{m-2}}{(m-2)!} \left(\int_0^L  a^{-\frac{1}{p-1}}(x)
   dt\right)^{\frac{p-1}{p}} \left(\int_0^L  a(x)  \left|   u^{(m)}(t)  \right|^p
   dt\right)^{\frac{1}{p}}.
   \end{align*}
Now, we can bound
  \begin{align*}
  \int_0^L   \left| u \right|^{p-1} \left| u' \right| dx  &   \le
A\cdot B\cdot L \\ \\
 & = C \left(\int_0^L a^{-\frac{1}{p-1}}(x)
   dt\right)^{p-1}  \int_0^L  a(x)  \left|   u^{(m)}(t)  \right|^p
   dt \end{align*}
   where
$$ C  =  \frac{ L^{mp-p} }{ (m-1)^{p-1} [(m-2)!]^p},
  $$
and the result follows.
 \end{proof}

\begin{rem}
When $p=2$, and $a\equiv 1$, we get
 $$
 \frac{ (m-1) [(m-2)!]^2}{ 2 L^{2m-1} }   \le   \sup_{0\le x\le L}\int_0^x \rho(t)dt
 $$
as in Theorem \ref{lyapi2}.
\end{rem}

\section{The homogenization problem}

\begin{proof}[Proof of Theorem \ref{main}.]

We prove only the assertions for the positive eigenvalues $\{\lam_{\ve,k}^+\}$, for the negative ones the result follows by considering the weight $-\rho$.

(1)
Let $\rho \in L^1 $ be an $L$-periodic function with zero mean.
Then, a simple computation gives.
$$
\sup_{(a,b)\subset [0,L]} \left|\int_a^b \rho(\tfrac{x}{\ve})dt \right|\le
\sup_{(a,b)\subset [0,\ve]}  \int_a^b |\rho(\tfrac{x}{\ve})|dt \le \ve \|\rho\|_{L^1(0,L)}.
$$

Now, Lyapunov's inequality \eqref{jfbjppams} gives
$$
\frac{k^{p-1}}{\ve p  \|\rho\|_{L^1(0,L)}} \left(  \int_{0}^{L}  a^{-\frac{1}{p-1}}(t)
   dt  \right)^{1-p}  \le \;  \lam_{\ve,k}^+
$$
which proves that the eigenvalues diverge as $\ve\to 0^+$.

\bigskip
(2)  Let us assume that $\bar\rho<0$, and let us introduce the weight
 $$
 \sigma = \rho - \bar\rho.$$
Now, $\sigma > \rho$, and $\bar\sigma=0$. So, from from the Sturmian comparison theorem \ref{pesos}, and part (1) we get
$$
\lam_{\ve,k}^+(\rho)\ge\lam_{\ve,k}^+(\sigma) \ge
\frac{k^{p-1}}{\ve p  \|\sigma\|_{L^1(0,L)}} \left(  \int_{0}^{L}  a^{-\frac{1}{p-1}}(t)
   dt  \right)^{1-p},
$$
and the result follows.

\bigskip
(3) Let us assume now that $\bar\rho>0$, and let us show that the eigenvalues $\lam_{\ve,k}^+$ convege to $\lam_{0,k}^+$. To this end, we need to show first
that $\{\lam_{\ve,k}^+\}_{\ve}$ is uniformly bounded away from zero and infinity. Then, the remaining steps of the proof are the same as in the case of positive
weights, and can be found
in \cite{Ch,FBPS1}:  we extract a convergent sequence of eigenvalues, we choose a sequence of normalized eigenfunctions corresponding to these
eigenfunctions, and Rellich-Kondrachov enable us to extract a convergent subsequence.
The convergence of the full family follows from the simplicity of the eigenfuctions of problem \eqref{limit}, see \cite{FBPS1} for details.

So, we only need to show that $c <\lam_{\ve,k}^+< C$ for some positive constants $c$, $C$.

The lower bound follows from Lyapunov's inequality \eqref{jfbjppams},
$$
 \frac{k^{p-1}}{p} \left(  \int_{0}^{L}  a^{-\frac{1}{p-1}}(\tfrac{t}{\ve})
   dt  \right)^{1-p}  \le \;  \lam_{\ve,k}^+ \cdot \sup_{(a,b)\subset [0,L]} \left|\int_a^b \rho(\tfrac{t}{\ve})dt \right|
\le \lam_{\ve,k}^+  \int_0^L |\rho(\tfrac{t}{\ve})|dt,
$$
and using that
$$\int_0^L f(\tfrac{t}{\ve})dt = \int_0^L f(t)dt + O(\ve)$$
for any $L$-periodic function $f \in L^1$, we get a lower bound for $\lam_{\ve,k}^+$ independently of $\ve$.

Let us find an upper bound of the first eigenvalue. To this end, we construct a test function and compute the Rayleigh  quotient \eqref{rayleigh}. We choose
$h$ small,
and we define a continuous function $w$ which grows linearly from zero to 1 in $[0,h]$, remains constant in $[h,L-h]$, and decreases linearly from 1 to zero in $[L-h,L]$.
Now, we have
\begin{align*}
\int_0^L a(\tfrac{t}{\ve})  |w'|^p & \le 2h^{-p} \int_0^L a(\tfrac{t}{\ve})dt  \\
 & \le 4L h^{-p} \|a\|_{L^1([0,L])} \\ \\
 \int_0^L \rho(\tfrac{x}{\ve}) |w|^p dx & \ge \left(\Big\lfloor \frac{L-2h}{\ve}\Big\rfloor-2\right) \ve L\int_0^L \rho(t) dt
- 2 \left(\Big\lfloor \frac{h}{\ve}\Big\rfloor+2\right) \ve L\int_0^L |\rho(t)| dt \\
& \ge \frac{L^3}{2} \bar\rho,
\end{align*}
for $h$ fixed sufficiently small and any $\ve<\ve_0(h)$.

Therefore,
$$
\lam_{\ve,1}^+ \le \frac{8 \|a\|_{L^1([0,L])} }{h^p L^2\bar\rho},
$$
and the first eigenvalue is bounded above.

The same argument can be applied for higher eigenvalues. In fact let $w$ be the previously defined function and introduce $k$ functions $\{w_i\}_{1\le i \le k}$ defined on $[0,L]$, where
$$
w_i(x)= \left\{\begin{array}{ll} w(kx-(i-1)L) & x\in [(i-1)L/k, iL/k], \\
0 & x\in [0,L]\setminus [(i-1)L/k, iL/k].\end{array}\right.
$$

We consider the set $C=span\{w_1,\dots, w_k\} \cap S$,  where $S$ is the unit ball in $W_0^{1,p}([0,L])$. Since $\gamma(C)=k$, and $C\subset M^+(\rho)$ for $\ve<\ve_0(k)$, it is an
admissible set in the characterization of $\lam_{\ve,k}^+$. Thus, if $v=\sum c_i w_i\in C$,
we have $\sum_{i=1}^k |c_i|^p = 1$ and
\begin{align*}
\int_0^L a(\tfrac{x}{\ve})  |v'|^p\, dx & \le 2h^{-p}k^p \int_0^L a(\tfrac{x}{\ve})\, dx \sum_{i=1}^k |c_i|^p  \\
 & \le 4L h^{-p} k^p\|a\|_{L^1([0,L])} \\ \\
\int_0^L \rho(\tfrac{x}{\ve}) |v|^p \, dx & = \sum_{i=1}^k |c_i|^p \int_0^L \rho(\tfrac{x}{\ve}) |w_i|^p \, dx
\\  & \ge \sum_{i=1}^k |c_i|^p  \frac{L^3}{k^3 2} \bar\rho \\
& =    \frac{L^3}{2k^3} \bar\rho .
\end{align*}
and we get
$$
 \lam_{\ve,k}^+ = \inf_{C \in \C_k}  \sup_{v \in C} \frac{ \int_0^L
a(\tfrac{x}{\ve})|v'|^p\, dx}{\int_0^L \rho(\tfrac{x}{\ve}) |v|^p \, dx} \le \frac{8 k^{p+3}}{L^2 \bar\rho} ,
$$
and the proof is finished.
\end{proof}

\section*{Acknowledgements}

This work was partially supported by Universidad de Buenos Aires under grant  20020130100283BA, ANPCyT PICT2012 0153,
and by CONICET (Argentina) PIP 5478/1438.

\def\cprime{$'$}
\providecommand{\bysame}{\leavevmode\hbox to3em{\hrulefill}\thinspace}
\providecommand{\MR}{\relax\ifhmode\unskip\space\fi MR }
\providecommand{\MRhref}[2]{%
  \href{http://www.ams.org/mathscinet-getitem?mr=#1}{#2}
}
\providecommand{\href}[2]{#2}

\end{document}